\documentclass[11pt,english]{smfart}

\usepackage{amsmath,xy,leftidx,amsthm,sfheaders}
\usepackage{xspace}
\usepackage[psamsfonts]{amssymb}
\usepackage[latin1]{inputenc}
\usepackage{graphicx,color}
\usepackage{hyperref}

\theoremstyle{remark}

\newtheorem{remark}{\bf Remark}

\theoremstyle{plain}
\newtheorem{theorem}{\bf Theorem}
\newtheorem*{Theorem1}{\bf Main Theorem}
\newtheorem{proposition}[theorem]{\bf Proposition}

\newtheorem*{corollary*}{\bf Corollary}
\newtheorem{lemma}[theorem]{\bf Lemma}
\newtheorem{corollary}[theorem]{\bf Corollary}

\newtheorem*{conjecture*}{\bf  Conjecture}

\def\A{{\mathbb A}}
\def\C{{\mathbb C}}
\def\R{{\mathbb R}}
\def\Z{{\mathbb Z}}

\def\Q{{\mathbb Q}}

\def\D{\mathbb{D}}
\def\KK{\mathbb{K}}
\def\LL{\mathbb{L}}
\def\p{\mathbb{P}}
\def\N{{\mathbb N}}

\def\sa{{\mathsf a}}
\def\sb{{\mathsf b}}
\def\sc{{\mathsf c}}
\def\sP{{\mathsf P}}
\def\sQ{{\mathsf Q}}

\def\cO{\mathcal{O}}

\def\cT{{\mathcal T}}

\def\a{\alpha}

\DeclareMathOperator{\an}{an}

\DeclareMathOperator{\hdiv}{\widehat{div}}

\DeclareMathOperator{\diam}{diam}

\topmargin 0cm
\oddsidemargin 0.6cm
\evensidemargin 0.6cm
\textheight 22cm
\textwidth 15cm

\def\and{{\quad\text{and}\quad}}

\begin{document}

\title[Continuity of the Green function]{Continuity of the Green function in meromorphic families of polynomials}
\author{Charles Favre}
\address{CMLS, \'Ecole polytechnique, CNRS, Universit\'e Paris-Saclay, 91128 Palaiseau Cedex, France}
\email{charles.favre@polytechnique.edu}
\author{Thomas Gauthier}
\address{LAMFA, Universit\'e de Picardie Jules Verne, 33 rue Saint Leu, 80039 Amiens Cedex}
\address{CMLS, \'Ecole polytechnique, CNRS, Universit\'e Paris-Saclay, 91128 Palaiseau Cedex, France}
\email{thomas.gauthier@u-picardie.fr}

\thanks{First author is supported by the ERC-starting grant project "Nonarcomp" no.307856, both authors are partially supported by ANR project ``Lambda'' ANR-13-BS01-0002}

\date{\today}

\begin{abstract}
We prove that along any marked point  the Green function of a meromorphic family of polynomials parameterized by the punctured unit disk explodes exponentially fast near the origin with a continuous error term.
\end{abstract}

\maketitle
\tableofcontents

\section*{Introduction}
 
Our aim is to analyze in detail the degeneration of the Green function of a meromorphic family of polynomials. Our main result is somehow technical but is the key for applications 
in the study of algebraic curves in the parameter space of polynomials using technics from arithmetic geometry. In particular it applies to the dynamical Andr\'e-Oort conjecture
for curves in the moduli space of polynomials~\cite{BD,ghioca-ye,favre-gauthier} and to the problem of unlikely intersection~\cite{BD-unlikely}. We postpone to another paper these applications.

\medskip

Let us describe our setup. We fix any algebraically closed complete metrized field $(k,|\cdot|)$. In the applications we have in mind, the field $k$ is either the field of complex numbers, or
the $p$-adic field $\C_p$. In particular, the norm $|\cdot|$ may be either Archimedean or non-Archimedean. 

Let $P$ be any degree $d\ge 2$ polynomial with coefficients in $k$. Recall that the sequence of functions
$\frac1{d^n} \log\max \{ 1, |P^n|\}$ converges uniformly on $k$ to a continuous function $g_P$ which satisfies the invariance property $g_P \circ P = d g_P$.  
The function $g_P$ is also continuous as a function of $P$ when the polynomial ranges over the set of polynomials of degree $d$.
Our analysis gives precise informations on the behaviour of $g_P$ when $P$ degenerates.

\medskip

More precisely, denote by $\D = \{ |z| <1\}$ the open unit disk in the affine line over $k$, 
and let $\cO(\D)$ be the set of analytic functions on $\D$. A function $f$ belongs to $\cO(\D)$ if it can be expanded as a power series $f(t) = \sum_{i\ge 0} a_i t^i$
with the condition $\sum_{i\ge 0} |a_i|\, \rho^i< \infty$ (resp. $|a_i| \rho^i \to 0$) for all $\rho<1$ when $k$ is Archimedean (resp. non-Archimedean). 
Observe that a function $f$ belongs to $\cO(\D)[t^{-1}]$ iff it is a meromorphic function on $\D$ with a single pole at $0$. 
Now it follows from\cite[\S3]{demarco} that  $g_{P_t} (a(t))\sim \alpha \log|t|^{-1}$ for some non negative constant $\alpha$ (see also~\cite{favre} for a generalization of this fact to higher dimension).

\begin{Theorem1}\label{tm:continuity}
For any meromorphic family $P_t\in\cO(\D)[t^{-1}][z]$ of degree $d\ge2$ polynomials and for any function $a(t)\in\mathcal{O}(\D)[t^{-1}]$, 
there exists a non-negative rational number $\alpha\in \Q_+$ such that the function
$$
h(t) := g_{P_t} (a(t)) - \alpha \log |t|^{-1}
$$
extends continuously across the origin.  Moreover, one of the following occurs: 
\begin{enumerate}
\item
there exists an affine change of coordinates depending analytically on $t$ conjugating $P_t$
to an analytic family $Q_t$ of polynomials (so that $\deg (Q_0) =d$);
\item
the constant $\alpha$ is positive and $h$ is harmonic in a neighborhood of $0$;
\item
the constant $\alpha$ vanishes, and $h(0)=0$.
\end{enumerate}
\end{Theorem1}
This result was previously known only for polynomials of degree $3$ with a marked critical point, see~\cite[Theorem 3.3]{ghioca-ye}. Indeed the core of the proof is the continuity of $h(t)$
when the constant $\alpha$ is zero and we follow their line of arguments at this crucial step.

Observe that our main theorem fails for families of rational maps. 
DeMarco and Okuyama have recently constructed a family of rational maps of degree $2$ with a critical marked point for which the continuity statement does not hold. The rationality of the coefficient $\alpha$ also fails for rational maps, as shown by DeMarco and Ghioca in~\cite{DeMarco-Ghioca}.

\medskip

Suppose that $k$ is of characteristic zero.
Recall that the equilibrium measure $\mu_P$ of  a polynomial $P$ of degree $d\ge 2$ is the limit of the sequence of pull-backs
$\mu_P := \lim_{n\to\infty} d^{-n} P^{n*} \delta_x$ for all  $x$ but at most two exceptions, see~\cite{MR2578470}. It is a $P$-invariant probability measure whose support is the Julia set, 
and it integrates the logarithm of the modulus of any polynomial. 
In particular,  one can define the Lyapunov exponent $L(P)$ as the integral $L(P) = \int \log|P'| \, d\mu_P$. 
By the Manning-Przytycky's formula (obtained by~\cite[\S 2]{okuyama} in the non-Archimedean case), the Lyapunov exponent satisfies
the formula
$$
L(P_t) = \log |d| + \sum_{i=1}^{d-1}  g_{P_t}(c_i)~,
$$
where $c_1, \cdots , c_{d-1}$ denote the critical points of $P_t$ counted with multiplicity. 

Our Main Theorem then implies the
\begin{corollary}\label{cor1}
For any meromorphic family $P_t\in\cO(\D)[t^{-1}][z]$ of degree $d\ge2$ polynomials defined over a field of characteristic zero, 
there exists a non-negative rational number $\lambda$ such that 
the function $t\mapsto L(P_t) - \lambda \log |t|^{-1}$
extends continuously through the origin. 

Moreover we have $\lambda >0$ except if there exists an affine change of coordinates depending on $t$ conjugating $P_t$
to an analytic family of polynomials.
\end{corollary}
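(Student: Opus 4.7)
The plan is to combine the Main Theorem with the Manning--Przytycki formula recalled above, namely $L(P_t) = \log|d| + \sum_{i=1}^{d-1} g_{P_t}(c_i(t))$. The critical points $c_i(t)$ are roots in $z$ of the polynomial $P'_t(z) \in \cO(\D)[t^{-1}][z]$ and are therefore algebraic over $\cO(\D)[t^{-1}]$; Puiseux's theorem provides an integer $N\ge 1$ such that after the ramified base change $t = \tau^N$, each $c_i(\tau)$ becomes a meromorphic function in $\cO(\D)[\tau^{-1}]$.

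\medskip

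I then apply the Main Theorem to the family $\tilde P_\tau := P_{\tau^N}$ with each marked point $c_i(\tau)$. This yields rational numbers $\alpha_i \in \Q_+$ and continuous functions $h_i : \D \to \R$ with $g_{\tilde P_\tau}(c_i(\tau)) = \alpha_i \log|\tau|^{-1} + h_i(\tau)$. Summing over $i$ and invoking Manning--Przytycki gives
\[
L(P_t) - \log|d| = \Big(\sum_{i=1}^{d-1} \alpha_i\Big)\log|\tau|^{-1} + \sum_{i=1}^{d-1} h_i(\tau),
\]
with $t = \tau^N$. Setting $\lambda := N^{-1}\sum_i \alpha_i \in \Q_+$, the first term becomes $\lambda \log |t|^{-1}$. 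The left-hand side depends only on $t$, the set $\{c_i(\tau)\}_i$ is Galois-stable under $\tau\mapsto \zeta\tau$ ($\zeta^N=1$), and $\tau\mapsto \tau^N$ is a proper continuous surjection onto $\D_t$; so the remainder $\sum_i h_i(\tau)$ descends to a continuous function of $t$ on $\D$, proving the first assertion.

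\medskip

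For the moreover part, suppose $\lambda = 0$. Then all $\alpha_i$ vanish, so by the Main Theorem each marked critical point falls into case (1) or case (3). If any is in case (1), the family $\tilde P_\tau$ is analytically conjugate to an analytic family, which descends (through the base change) to a (possibly meromorphic) affine conjugation of $P_t$ onto an analytic family of polynomials, and we are done. The main obstacle is to rule out the remaining scenario in which every critical point satisfies case (3) while $\tilde P_\tau$ is not in case (1). I would handle this by first normalizing $\tilde P_\tau$ to monic centered form via a meromorphic affine conjugation involving a $(d-1)$-th root of the leading coefficient; simultaneous boundedness of the Green functions at all critical points then forces the filled Julia set $K_{\tilde P_\tau}$ to remain uniformly bounded as $\tau \to 0$, and Vieta's relations applied to the fixed points (which always lie in $K_{\tilde P_\tau}$) bound all coefficients of the normalized polynomial. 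The normalized family thus extends analytically across the origin, and descending through the normalization yields the affine change of coordinates required by the corollary.
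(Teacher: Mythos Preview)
Your argument for the first assertion (the continuity of $L(P_t) - \lambda\log|t|^{-1}$) is essentially the paper's: pass to a ramified cover $t=\tau^N$ to make the critical points single-valued, apply the Main Theorem to each $c_i$, sum, and descend via the Galois symmetry $\tau\mapsto\zeta\tau$. One small point you gloss over is the convergence of the Puiseux expansions of the $c_i$; the paper invokes Artin approximation here, but any standard argument suffices.

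For the ``moreover'' part your strategy diverges from the paper's and has two genuine gaps. First, the implication ``all critical Green values stay bounded (in fact tend to~$0$) $\Rightarrow$ the filled Julia set of the monic centered normalisation stays uniformly bounded'' is a properness statement on the parameter space of monic centered polynomials. Over~$\C$ this is Branner--Hubbard, but the corollary is stated over an arbitrary algebraically closed complete field of characteristic zero, and you give no argument in the non-Archimedean case. (In fact the cleanest non-Archimedean version of exactly this statement is Kiwi's criterion that the paper invokes.) Second, and more seriously, your conjugation to monic centered form lives over~$\tau$, not over~$t$: it involves a $(d-1)$-th root of $\a_0(\tau^N)$, and neither this conjugation nor the resulting analytic family $\hat P_\tau$ need be invariant under $\tau\mapsto\zeta\tau$ (the monic centered form is only unique up to the $\mu_{d-1}$-action $z\mapsto\omega z$). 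So even once you know $\hat P_\tau$ extends analytically in~$\tau$, you have not produced an affine change of coordinates \emph{depending on~$t$} that conjugates $P_t$ to an analytic family in~$t$.

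The paper sidesteps both issues by working directly over the field $k((t))$: from $\lambda=0$ one gets $g_\sP(\sc_i)=0$ for all critical points of the formal polynomial~$\sP$, and Kiwi's criterion then says the filled Julia set of~$\sP$ is a single closed ball~$\bar B$. The barycenter-of-fixed-points trick locates a center of~$\bar B$ in $k((t))$ (not merely in an extension), so the affine map sending~$\bar B$ to the unit ball has coefficients in $k((t))$; after truncation one obtains a Laurent-polynomial conjugation of~$P_t$ to an analytic family, with no descent needed.
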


\smallskip

Let $C$ be a smooth connected affine curve defined over a  number field $\KK$. An \emph{algebraic family} $P$ parametrized by $C$ is a map $P:C\times \A^1_\KK\to C\times \A^1_\KK$ of the form  $P(t,z)=(t,P_t(z))$ where $P_t$ is a polynomial of degree $d\ge 2$ for all $t$.

A pair $(P,a)$ with $a\in\KK[C]$ is said to be \emph{isotrivial} if there exists  a finite field extension $\LL/\KK$, a finite branched cover $p:C'\to C$ defined over $\LL$ and a map $\phi:C'\times\A^1\to C'\times \A^1$ of the form $\phi(t,z)=(t,\phi_t(z))$ where $\phi_t$ is an affine map for all $t$ such that both $\phi_t\circ P_t\circ \phi_t^{-1}$ and $\phi_t(a(t))$ are independent of $t$.
Finally $a$ is \emph{persistently preperiodic} on $C$ if there exist two integers $n>m\geq0$ such that $P^n(a)=P^m(a)$ (as regular functions on $C$).

Recall from~\cite{Silverman} that for any $t$ in the algebraic closure $\bar{\KK}$ of $\KK$, one can build a canonical height function $\hat{h}_{P_t}: \bar{\KK} \to \R_+$ such that  $\hat{h}_{P_t}\circ P_t = d  \, \hat{h}_{P_t}$ and 
$\hat{h}_{P_t}(b) =0$ iff $b$ is preperiodic.

Let us define the height function $h_{P,a}$ on $C(\bar{\KK})$ by setting
\[h_{P,a}(t):=\hat{h}_{P_t}(a(t)), \text{ for all } t\in C(\bar{\KK}).\]
Note that  $h_{P,a}(t)=0$ if and only if $a(t)$ is preperiodic under iteration of $P_t$.

Our next result shows that this height function is in fact determined by nice geometric data in the sense of Arakelov theory.
We refer to e.g. the survey~\cite{ACL2}  for basics on metrizations on line bundles and their associated height function.

Denote by $\bar{C}$ the (unique up to marked isomorphism) smooth projective curve containing $C$ as an open Zariski dense subset.
\smallskip

\begin{corollary}\label{cor2}
Let $C$ be an irreducible affine curve defined over a number field $\KK$. Let $P$ be an algebraic family parametrized by $C$ and pick any marked point $a\in\KK[C]$. Assume that the pair $(P,a)$ is not isotrivial and that $a$ is not persistently preperiodic on $C$. 

Then there exists an integer $q\geq1$ such that the height function $q\cdot h_{P,a}$ is induced by an adelic semipositive continuous metrization on some ample line bundle $\mathcal{L}\to \bar{C}$.

Moreover, the global height of the curve $\bar{C}$ is zero, i.e. $h_{P,a}(\bar{C})=0$.
\end{corollary}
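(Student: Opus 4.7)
The plan is to invoke the Main Theorem at every place of $\KK$ in order to build, place by place, an adelic semipositive continuous metrization on an ample line bundle $\mathcal L\to\bar C$ whose associated Arakelov height is $q\cdot h_{P,a}$. Concretely, fix a place $v$ of $\KK$, any point $p\in\bar C\setminus C$, and a local parameter $t_p$ at $p$. Applying the Main Theorem to the meromorphic family $(P_t,a(t))$ viewed through $t_p$ produces a rational number $\alpha_{p,v}\ge 0$ and a continuous function $h_{p,v}$ with $g_{P_t,v}(a(t))=\alpha_{p,v}\log|t_p|_v^{-1}+h_{p,v}(t)$ near $p$. Only finitely many pairs $(p,v)$ give a nonzero $\alpha_{p,v}$: at almost every finite place $v$ the family $P$ has good reduction and $\alpha_{p,v}$ takes a generic value depending only on the algebraic data. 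Choosing a common denominator $q\ge 1$ clearing all the $\alpha_{p,v}$'s, let $\mathcal L\to\bar C$ be the line bundle whose divisor at $v$ is $qD_v:=q\sum_p \alpha_{p,v}[p]$.

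On $\mathcal L^{\otimes q}$ I would then declare the metric at $v$ via $\log\|s_v\|_v=-q\cdot g_{P_t,v}(a(t))$ on a canonical local trivialization near each boundary point. The Main Theorem yields continuity; semipositivity follows from the description of $g_{P_t,v}(a(t))$ as a locally uniform limit of $d^{-n}\log\max\{1,|P_t^n(a(t))|_v\}$, each term of which is subharmonic in $t$. Good reduction at almost every finite $v$ produces a model metric, so the resulting collection is adelic semipositive continuous in Zhang's sense, and its Arakelov height coincides with $q\cdot h_{P,a}$ by the decomposition of canonical heights into local Green functions.

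Ampleness of $\mathcal L$ (after enlarging $q$ if necessary) is the technical heart of the argument and is where the hypotheses on $(P,a)$ enter. I would argue by contradiction: if $\deg\mathcal L=0$ then every $\alpha_{p,v}$ vanishes, so the trichotomy of the Main Theorem puts us in alternative (1) or alternative (3) at every boundary point and every place; in either case $t\mapsto g_{P_t,v}(a(t))$ extends continuously across $p$, giving a continuous subharmonic function on the compact Berkovich analytification $\bar C^{\mathrm{an}}_v$, which must then be constant. Summing over places, $h_{P,a}$ is constant on $C(\bar\KK)$; non-negativity of the canonical height together with the existence of algebraic parameters $t$ for which $a(t)$ is preperiodic (which always exist when $(P,a)$ is non-isotrivial) force this constant to be zero. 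But then $a(t)$ is preperiodic at every algebraic $t$, and standard rigidity in arithmetic dynamics forces $a$ to be persistently preperiodic or $(P,a)$ isotrivial, contradicting the hypotheses. Translating the local-analytic trichotomy back to the global algebraic dichotomy in a rigorous fashion is what I expect to be the main technical obstacle.

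Finally, $h_{P,a}(\bar C)=0$ is the self-similarity statement. The functional equation $g_{P_t,v}(P_t(a(t)))=d\cdot g_{P_t,v}(a(t))$ at every place identifies the metric on $\mathcal L^{\otimes q}$ associated to $(P,P\cdot a)$ with the $d$-th tensor power of the one associated to $(P,a)$, so iterating, $\overline{\mathcal L^{\otimes q}}$ appears as the Tate limit $\lim_n d^{-n}(P^n\cdot a)^*\overline{\mathcal N}$ of any fixed auxiliary adelic metrization $\overline{\mathcal N}$ on $\O_{\p^1}(1)$. The standard Tate argument then shows that $\hat c_1(\overline{\mathcal L^{\otimes q}})^2$ is the limit of $d^{-2n}$ times a quantity growing at most linearly in $d^n$, hence vanishes, which via Zhang's height formula is exactly the statement $h_{P,a}(\bar C)=0$.
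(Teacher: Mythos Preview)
Your construction of the line bundle contains a genuine gap. You produce, for each boundary point $p$ and each place $v$, a rational number $\alpha_{p,v}$ and then speak of ``the line bundle whose divisor at $v$ is $qD_v=q\sum_p\alpha_{p,v}[p]$''. This is not well-defined: a line bundle on $\bar C$ is determined by a single divisor, not one for each place. What is needed---and what you never establish---is that $\alpha_{p,v}$ is \emph{independent of $v$}. Your claim that ``only finitely many pairs $(p,v)$ give a nonzero $\alpha_{p,v}$'' is in fact false: once $\alpha_p>0$ (which must occur at some $p$ for ampleness), it is positive at \emph{every} place. The paper resolves this by interpreting the family as a single polynomial $\sP$ over the formal Laurent series field $\LL((t))$ with the $t$-adic norm and proving (see the remark closing \S\ref{sec:julia}) that for every place $v$ the coefficient $\alpha_{p,v}$ equals the value $g_\sP(\sa)$ of the non-Archimedean Green function at the formal point, a quantity which visibly does not depend on $v$. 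This identification is the key step making the construction work, and it is missing from your outline.

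Your approaches to ampleness and to $h_{P,a}(\bar C)=0$ differ from the paper's but are closer in spirit to valid alternatives. For ampleness the paper works at a single Archimedean place: constancy of $g_{a,v_0}$ on $\bar C(\C)$ implies normality of $\{P_t^n(a(t))\}_n$ via \cite{favredujardin}, and then DeMarco's theorem forces $a$ to be persistently preperiodic. Your route through ``$h_{P,a}$ is constant, hence zero, hence by rigidity \dots'' is more circuitous and the final ``standard rigidity'' step is not named; you would in effect need the same DeMarco-type input. For $h_{P,a}(\bar C)=0$ the paper computes the self-intersection directly as a non-negative sum and then invokes Zhang's inequality together with the existence of infinitely many parameters where $a$ is preperiodic to force the essential minimum to be zero. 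Your Tate-limit argument is a legitimate alternative (it appears in related literature), but note that the line bundles $f_n^*\mathcal O(1)$ vary with $n$, so the limit must be understood as convergence of metrics on the fixed bundle $\mathcal L$ after rescaling; the conclusion then rests on continuity of the Arakelov pairing under uniform limits of semipositive metrics and the projection formula $\hat c_1(f_n^*\bar{\mathcal N})^2=\deg(f_n)\cdot\hat c_1(\bar{\mathcal N})^2$, which you should make explicit.
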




\begin{center}
$\diamond$
\end{center}

Let us explain how we prove our Main Theorem. 
Basic estimates using the Nullstellensatz imply the existence of a constant $C>0$ such that 
\begin{equation}\label{eq:basic}
\left|
\frac1d \, \log \max \{ 1, |P_t(z)|\} - \log \max \{ 1, |z|\} 
\right|
\le C \, \log|t|^{-1}
\end{equation}
for all $|t|<1/2$ and for all $z\in k$.  Using~\eqref{eq:basic}, it is not difficult to see that  $g_{P_t} (a(t))= \alpha \log |t|^{-1} + o(\log|t|^{-1})$ for some $\alpha\in \R_+$
(see~\cite[\S3]{demarco}). To get further, we shall interpret the constant $\alpha$ in terms of the dynamics of  the polynomial with coefficients in $k((t))$  naturally induced by the family $P_t$.

We endow the ring $\cO(\D)[t^{-1}]$ with the $t$-adic norm $|\cdot|_r$ whose restriction to $k$ is trivial and normalized by $|t| = e^{-1}$.
Observe that the completion of its field of fraction is the field of Laurent series $(k((t)), |\cdot|)$. We may then view the family $P_t$ as a polynomial $\sP$ with coefficients in 
the complete metrized $(k((t)), |\cdot|)$ and consider its dynamical Green function $g_\sP : k((t)) \to \R_+$. The marked point $a$ gives rise to a point $\sa \in k((t))$, and
it follows from the analysis developed in~\cite{favre} that $\alpha = g_{\sP}(\sa)$. 

\smallskip

Let us first consider  the case $\alpha >0$. The point $\sa$ then belongs to the basin of attraction at infinity of $\sP$ which implies $a(t)$ to belong also  to the basin of attraction
at infinity of $P_t$ for all $t$ small enough. To conclude one then uses the fact that the Green function is the logarithm of the B\"ottcher coordinate and expand this coordinate as an analytic function 
in the two parameters $z$ and $t$.  This strategy was made precise in degree $3$ in~\cite{favre-gauthier} and~\cite{ghioca-ye}, and we write the details here in arbitrary degree for the convenience of the reader. 

When $\alpha=0$ the point $\sa$ lies in the filled-in Julia set of $\sP$. Since $k((t))$ is discretely valued,  results of Trucco~\cite{trucco}
give strong restrictions on the orbit of $\sa$. In \S\ref{sec:compact}, we give a direct argument showing that either $\sa$ lies in the Fatou set of $\sP$ and belongs to a preperiodic ball; or $\sa$ lies in the Julia set of $\sP$ and the closure of 
its orbit is compact in $k((t))$. 

In the former case, one can make a change of coordinates (depending on $t$) and assume that $ P_t(z)  =Q(z) + t\, R_t(z)$ where $\delta = \deg (Q) \le d$. 
When $\delta =d$ the family of polynomial $P_t$ degenerates to a degree $d$ polynomial and the Green function $g_{P_t}(z)$ is continuous both in $z$ and $t$. 
Otherwise $\delta <d$, and direct estimates show that $  g_{P_t} (a(t)) = o(1)$ as required. 

In the latter case, the estimates are more delicate and we follow the arguments of Ghioca and Ye in~\cite[Theorem 3.1]{ghioca-ye}. The key observation is the following. Since the closure of the orbit of $\sa$ is compact, for any integer $l$ there exists a finite collection of polynomials $Q_1, \cdots, Q_N$ such that for all $n$, one has
$P^n_t(a(t)) = Q_{i_n}(t) + o(t^{l+1})$ for some $i_n \in \{ 1, \cdots , N\}$.  The proof of $  g_{P_t} (a(t)) = o(1)$ uses in a subtle way this approximation result together with~\eqref{eq:basic}.

\medskip

\noindent{\bf Ackowledgement}: we extend our warmful thanks to Laura DeMarco and Yusuke Okuyama for sharing with us
their results on the (dis)-continuity of the error term for general rational families.

%
%
%


\section{Compact orbits of polynomials}\label{sec:compact}

In this section we fix a discrete valued complete field $(L, |\cdot|)$. In our applications we shall take $L = k((t))$
endowed with the $t$-adic norm  normalized by  $|t| = e^{-1}<1$ where $k$ is an arbitrary field. 

\subsection{A criterium for the compactness of polynomial orbits}
Let $P$ be any polynomial of degree $d\ge 2$ with coefficients in $L$. Recall that one can find a positive constant $C>0$ such that
\begin{equation}\label{eq:base}
\frac1{C} \le
\frac{\max \{ 1, |P(z)|\}}{\max\{ 1, |z|\}^d}
\le C
\end{equation}
for all $z \in L$. It follows that the sequence $ \frac1{d^n} \log \max \{ 1, |P^n|\}$ converges uniformly on $L$ to a  continuous function $g_P: L \to \R_+$
such that $g_P \circ P = d \, g_P$ and $g_P(z) = \log |z| + O(1)$.

\begin{theorem}\label{thm:trucco}
Suppose $P \in L [z]$ is a polynomial of degree $d\ge2$, and $a$ is a point in $L$. 
Then $g_P(a) \in \Q_+$ and one of the following holds.
\begin{enumerate}
\item
The iterates of $a$ tend to infinity in $L$, and $g_P(a) >0$.
\item
The point $a$ lies in a pre-periodic ball $B$ whose radius lies in $|L^*|$, and $g_P(a) =0$.
\item
The closure of the orbit of $a$ in $L$ is compact in $(L, |\cdot|)$, and $g_P(a) =0$.
\end{enumerate}
\end{theorem}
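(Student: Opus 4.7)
The plan is to establish rationality of $g_P(a)$ and the trichotomy separately, with the latter reduced to a Fatou/Julia split in the bounded-orbit case.

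For the escape case (1), I would first recall that once $|z|$ exceeds an explicit threshold depending only on $P$, the top-degree term dominates and $|P(z)|=|a_d|\,|z|^d$ with $a_d$ the leading coefficient, giving the closed formula $g_P(z)=\log|z|+(d-1)^{-1}\log|a_d|$ on this region. If $|P^n(a)|\to\infty$, this formula applied to $z=P^n(a)$ for $n$ large, combined with the functional equation $g_P(a)=d^{-n}g_P(P^n(a))$, places $g_P(a)$ in $d^{-n}\log|L^*|\subset\Q_+$ under the normalization $|t|=e^{-1}$, with $g_P(a)>0$. This settles case (1) entirely.

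In the bounded case $g_P(a)=0$ is immediate from the definition, and one must separate cases (2) and (3). After an affine change of coordinates I would assume the orbit lies in $\cO_L$, pick a uniformizer $\pi$, and observe that the orbit closure is compact in $L$ if and only if for every $N\ge1$ the reduced orbit $\{P^n(a)\bmod\pi^N\}$ is finite (by completeness and total boundedness). I would then split on whether there exists $r\in|L^*|$ with $\sup_k\diam(P^k(B(a,r)))<\infty$. In this Fatou subcase, the fact that polynomials send balls to balls together with discreteness of $|L^*|$ forces the iterated radii to take only finitely many values, and a no-wandering-domains argument adapted to the discretely valued setting provides a periodic ball in the forward orbit of $B(a,r)$; this gives case (2).

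The core difficulty is the Julia subcase, where $\sup_k\diam(P^k(B(a,r)))=\infty$ for every $r\in|L^*|$, and one must prove $\{P^n(a)\bmod\pi^N\}$ is finite for every $N$. My approach would be by contradiction: an infinite family of orbit points with distinct reductions mod $\pi^{N_0}$ would yield infinitely many disjoint balls of radius $|\pi|^{N_0}$ meeting the orbit within a bounded region, and tracking their $P$-preimages (each ball admitting at most $d$) together with the unbounded-radii Julia hypothesis should allow the extraction of a sub-orbit of balls with bounded diameter, contradicting the Julia condition. The delicate combinatorial tracking of ball preimages, in the spirit of Trucco's arguments, is the main technical obstacle I anticipate.
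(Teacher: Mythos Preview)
Your treatment of the escaping case (1) matches the paper's and is fine.

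The bounded case is where your approach diverges from the paper's, and where it runs into genuine gaps. In your Fatou subcase you invoke ``a no-wandering-domains argument adapted to the discretely valued setting.'' But no-wandering-domains for polynomials over a discretely valued field with infinite residue field is essentially Trucco's theorem itself---the very result the paper is giving an independent proof of. Over $k((t))$ with $k$ infinite there are infinitely many disjoint balls of each fixed radius inside any given ball, so the finiteness of the set of iterated radii does \emph{not} by itself force preperiodicity of $B(a,r)$; you would be assuming something at least as deep as what you are proving. In your Julia subcase you correctly identify the difficulty but only gesture at a preimage-counting argument, which as stated is not a proof.

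The paper bypasses the Fatou/Julia dichotomy entirely with a softer idea. It works in the tree $\cT$ of closed balls equipped with the \emph{weak} topology (evaluation of polynomials), in which bounded sets are compact. The weak closure $\Omega$ of the orbit of $a$ then either lies entirely in $L$---in which case a short argument (Proposition~\ref{prop0}) shows $\Omega$ is strongly compact, giving case (3)---or contains a ball $x$ of positive radius. In the latter situation one extracts $P^{n_i}(a)\in x$ with $P^{n_i}(a)\to x$ weakly, replaces $x$ by an iterate of maximal diameter, and observes that $P^{n_1-n_0}(x)$ meets $x$ and hence is contained in it. Either $P^{n_1-n_0}(x)=x$ (case (2)), or $P^{n_1-n_0}$ strictly contracts $x$ and thus has an attracting fixed point whose basin contains $x$, contradicting that $x$ is a weak accumulation point of the orbit. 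This maximal-diameter trick replaces both your no-wandering-domains appeal and your Julia-case combinatorics with a two-line inclusion argument; it is the key idea you are missing.
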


\begin{remark}
This fact is a direct consequence of the results of Trucco~\cite[Proposition 6.7]{trucco} when the characteristic of $k$ is zero. We present here a simple proof which does not rely on the delicate combinatorial analysis
done by Trucco in his paper and does not use any assumption on $k$.
\end{remark}

\subsection{The tree of closed balls}

Let $\cT$ be the space of closed balls in $L$:  a point in $\cT$ is a set of the form $\overline{B(z_0,r)}:=\{ z \in L, |z-z_0| \le r\}$ for some $z_0$ and some $r\in |L^*|$. 
The map sending a point $z\in L$ to the ball of center $z$ and radius $0$ identifies $L$ with a subset of $\cT$.
We endow $\cT$ with the weakest  topology making all evaluation maps $Q \mapsto |Q(x)| := \sup_x |Q|$ continuous for all $Q\in L[T]$.
By Tychonov, for any $r\ge 0$ the set  $\{ x\in \cT , |T(x)|\le r\}$ is  compact for this (weak) topology.

\smallskip

Observe that for any closed ball $x= \overline{B(z,r)} \in \cT$ with $r\in |L^*|$, we have $\diam(x)  := \sup_{z,z'\in x} |z -z'| =r$.
Since $(L,|\cdot|)$ is non-Archimedean,  any point $z'\in x$ is a center for $x$ so that  $ x= \overline{B(z',\diam(x))}$. Also any two balls $x, x'\in \cT$
are either contained one into the other or disjoint. When $x$ is contained in $x'$, we may write
$x= \overline{B(z,r)}$ and $x'=\overline{B(z,r')}$ for some $r, r'\in |L^*|$, and one sets $d(x',x) = |r' - r| =  |\diam (x') - \diam(x)|$. 

\smallskip

Denote by $\le$ the order relation on $\cT$ induced by the inclusion, i.e. $x \le x'$ iff the ball $x$ is included in $x'$. 
For any two balls $x = \overline{B(z,r)}$, $x'=\overline{B(z',r')}\in \cT$, we let $x \vee x'$ be the smallest closed ball containing both $x$ and $x'$, so that 
$$x \vee x' = B(z, \max\{ r, r', |z-z'|\}) = B(z', \max\{ r, r', |z-z'|\})~.$$ 
The distance between any two closed balls $x, x'$ is now defined as 
$$d( x, x') = \max\{ |\diam (x \vee x') - \diam(x)| , |\diam (x \vee x')- \diam(x')|\} \in |L^*|~.$$
The restriction of the distance $d$ to $L$ is the ultra-distance induced by the norm $|\cdot|$. 

In the sequel  the strong topology refers to the topology on $\cT$ induced by the ultra-distance $d$. 
It is not locally compact except if the residue field $\tilde{L} := \{ |z| \le 1\} / \{ |z| <1\}$ is finite. 

\smallskip

Let $x_n$ be a sequence of points of norm $\le 1$. Its residue classes $\tilde{x}_n$ are by definition their images in $\tilde{L}$ under the natural projection
$\{ |z| \le 1\} \to \tilde{L}$. If the points $\tilde{x}_n$ are all distinct then for any polynomial $Q = \sum_{k\le d} a_k T^k \in L[T]$ we have 
$|Q(x_n)| = \max \{  |a_k|\}$ for all $n$ large enough so that $x_n$ converges in the weak topology to the point $\overline{B(0,1)}$.

 \begin{proposition}  \label{prop0}
Let $F$ be any bounded infinite subset of $L$ so that $\sup_F |T|< \infty$. Then
\begin{enumerate}
\item
either the weak closure of $F$ is strongly compact;
\item
or one can find a  closed ball of positive radius $x\in \cT$ containing infinitely many points $x_n \in F$ such that $x_n \to x$.
\end{enumerate}
\end{proposition}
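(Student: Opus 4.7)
The plan is to use the discreteness of $|L^*|$ to stratify $F$ along balls of decreasing radii and apply the residue-counting argument already sketched in the excerpt. Fix a uniformizer $\pi$ of $L$ with $|\pi|<1$, let $R \in |L^*|$ be minimal with $F \subset \overline{B(0,R)}$, and for each $r \in |L^*|$ with $r\le R$ let $N(r) \in \N \cup \{\infty\}$ count the closed balls of radius $r$ meeting $F$. Note that $N$ is non-increasing as $r$ decreases and $N(R)=1$. The argument splits into two cases: (a) $N(r) = \infty$ for some $r$, or (b) $N(r)<\infty$ for all $r$.

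In case~(a), there is a largest $r_1 \in |L^*|$ with $N(r_1)=\infty$, and $r_0 := r_1/|\pi|$ then satisfies $N(r_0)<\infty$. By the pigeonhole principle, among the finitely many radius-$r_0$ balls meeting $F$, some ball $x = \overline{B(z_0,r_0)}$ must contain infinitely many radius-$r_1$ sub-balls meeting $F$. I pick $x_n \in F \cap x$ in pairwise distinct radius-$r_1$ sub-balls and verify $x_n \to x$ in the weak topology as follows. Choose $\pi_0 \in L^*$ with $|\pi_0| = r_0$. For any $Q\in L[T]$, the change of variable $T = z_0 + \pi_0 S$ gives $Q(z_0+\pi_0 S) = \sum_j c_j S^j$, and Gauss's lemma yields $|Q(x)| = \max_j|c_j| =: M$. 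Setting $J := \{j : |c_j|=M\}$ and picking $j_0 \in J$, the reduction of $\sum_{j\in J}(c_j/c_{j_0})S^j$ modulo $\{|\cdot|<1\}$ is a nonzero polynomial of degree $\le d$ in $\tilde L[\tilde S]$, with at most $d$ roots. Since the residues $\tilde s_n$ of $s_n := (x_n - z_0)/\pi_0$ are pairwise distinct, all but finitely many avoid these roots, forcing $|Q(x_n)| = M = |Q(x)|$ eventually. This establishes $x_n \to x$ weakly and hence alternative~(2).

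In case~(b), $F$ is totally bounded, so by completeness of $L$ its strong closure $\bar F^{\,s} \subset L$ is strongly compact. To conclude alternative~(1), I would show $\bar F^{\,w} = \bar F^{\,s}$ in $\cT$. First, the weak and strong topologies agree on $L$: continuity of polynomials gives one direction, while applying $Q=T-x_0$ to weak convergence towards a point $x_0 \in L$ yields $|x_n - x_0| \to 0$. Thus $\bar F^{\,s}$ is weakly compact, hence weakly closed in the Hausdorff space $\cT$, giving $\bar F^{\,w} \subset \bar F^{\,s}$. Second, no $y = \overline{B(z,r)}$ of positive radius lies in $\bar F^{\,w}$: any net $(x_\alpha) \subset F$ weakly converging to $y$ would be eventually in $y$ (take $Q=T-z$) but eventually outside each of the finitely many (by case~(b)) sub-balls of $y$ meeting $F$ (take $Q=T-z'_i$ for their centers $z'_i$, noting $\sup_{w\in y}|w-z'_i|=r$), contradicting $x_\alpha \in F$.

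The main obstacle is the residue-counting computation in case~(a), though this essentially duplicates the unit-ball calculation already flagged in the excerpt. A secondary technical issue in case~(b) is the net-theoretic handling, since the weak topology on $\cT$ is typically not first countable.
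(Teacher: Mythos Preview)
Your proof is correct (modulo the slip ``$N$ is non-increasing as $r$ decreases'' --- you mean non-decreasing, and you use it correctly in the next sentence). It does, however, follow a genuinely different route from the paper's.

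The paper splits according to whether the weak closure $\bar F$ already contains a ball of positive radius. If not, it uses the observation (which you also make) that for $z\in L$ the weak-open sets $\{x:|(T-z)(x)|<\epsilon\}$ coincide with the strong balls $B_\cT(z,\epsilon)$; weak compactness of $\bar F$ then immediately gives a finite subcover by strong $\epsilon$-balls, hence total boundedness, and completeness of $L$ finishes. If $\bar F$ does contain such a ball $x$, the paper normalizes to $x=\overline{B(0,1)}$ and \emph{inductively} carves out weak neighborhoods $U_n=\{|T|<1+\epsilon\}\cap\bigcap_{i<n}\{|(T-x_i)|>1-\epsilon\}$ of $x$ to produce $x_n\in F$ with pairwise distinct residues; the residue-counting remark stated just before the proposition then gives $x_n\to x$.

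Your dichotomy is based instead on the counting function $N(r)$. In case~(a) you \emph{construct} the limiting ball $x$ from scratch via pigeonhole at the critical pair of radii $r_0>r_1$, rather than starting from a point assumed to lie in $\bar F$; this is more concrete and avoids the inductive construction, at the cost of redoing the Gauss-lemma computation in full. In case~(b) you must additionally rule out positive-radius points in $\bar F^{\,w}$ by a separate net argument, whereas the paper gets this for free from its case hypothesis; this makes your compact case longer. Either organization works: the paper's is more economical, while yours makes the link between the tree's branching behaviour and weak accumulation explicit.
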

\begin{proof}
Let $\bar{F}$ be the weak closure of $F$ inside $\cT$. Since $F$ is supposed to be bounded, $\bar{F}$ is weakly compact. 
Observe that for any $z\in L$ and for any $\epsilon>0$, the set $B_\cT(z,\epsilon) := \{ x \in \cT,\, d(x,z) < \epsilon\}$ is 
weakly open since it coincides with $B_\cT(z,\epsilon) = \{ x \in \cT, |(T-z) (x)|< \epsilon\}$.

Assume first that $\bar{F}$ is included in $L$, and take $\epsilon>0$. The previous observation implies that  by weak compactness, the covering of $\bar{F}$ by the family
of balls for all $z\in \bar{F}$ admits a finite subcover. 
It follows that $\bar{F}$ is pre-compact. Since $L$ is complete, $\bar{F}$ is also complete hence strongly compact.

Assume now that $\bar{F}$ contains a point $x$ of positive diameter. Up to making an affine change of coordinates, we may suppose that $x = \overline{B(0,1)}$.
Since $(L,|\cdot|)$ is discrete, we may find $\epsilon>0$ such that $1-\epsilon <|z| <1+\epsilon$ implies $|z| =1$. 
It follows that the set $U = \{ y \in \cT, \, |T(y)|< 1+ \epsilon\}$ is an open neighborhood of $x$, hence it contains infinitely many points of $F$.

Let us now prove that one can find a sequence of points $x_n\in F\cap U$ such that $x_n \to x$. 
We construct the sequence $x_n$ by induction. Choose any $x_1\in x$, and consider the open set
$ U_1:=  \{ y \in \cT, \, |T(y)|< 1 + \epsilon \} \cap  \{ y \in \cT, \, |(T-x_1)(y)|> 1 - \epsilon \}$.
It is an open neighborhood of $x$ which does not contain $x_1$. We may thus find a point $x_2 \in U_1 \cap F$.
Proceeding inductively, we find a sequence of points $x_n$ and open neighborhoods $U_n$ of $x$ such that 
$$
x_n \in U_n :=  \{ y \in \cT, \, |T(y)|< 1 + \epsilon \} \bigcap_{i=1}^{n-1}  \{ y \in \cT, \, |(T-x_i)(y)|> 1 - \epsilon \}~.
$$
The choice of $\epsilon$ implies that the residue classes of $x_n$ and $x_m$ are all distinct which implies $x_n \to x$ as required.
%
\end{proof}

\smallskip

 Let $P(T) = a_0 T^d + \cdots + a_d$ be any polynomial with coefficients in $L$. We define the image of $x = \overline{B(z,r)} \in \cT$ by the formula
 $$
 P(x) = \overline{B(P(z),\max_{i\ge 1}\{ |a_i| r^i\} )}
 $$
 This map is weakly continuous since one has $|Q ( P(x))| = |(Q \circ P) (x)|$ for all polynomial $Q$. It coincides with $P$ on $L$.
 Observe that $P: \cT \to \cT$ preserves the order relation.
 
\smallskip
\begin{remark}\label{rem:tree to berk}
There is a canonical continuous and injective map from $\cT$ into the (Berkovich) analytification of 
$\A^1_L$ sending a closed ball $x\in \cT$ to the multiplicative semi-norm on $L[T]$ defined by  $P \mapsto |P(x)|$. 
This map identifies $\cT$ with the smallest  closed  subset of $\A^1_L$ whose intersection with the set of rigid points in $\A^1_L$ is equal to $L$. 
In the terminology of Berkovich~\cite[\S1]{Berko}, it consists only of type $1$ and type $2$ points.
\end{remark}

\subsection{Proof of Theorem~\ref{thm:trucco}}
When $g_P(a)$ is positive, then $|P^n(a)| \ge (\frac12\, g_P(a))^{d^n}$ for $n$ large enough so that $P^n(a)$ tends to infinity.
In that case one can refine~\eqref{eq:base} since by the non-Archimedean inequality one has 
$|P(z)| = (r |z|)^d$ for all $|z|$ large enough where $r^d$ is the norm of the leading coefficient of $P$ hence belongs to $e^\Z$.
If follows that $g_P(z) = \log (r |z|)$ for $|z|$ large enough, hence
$$
g_P(a) = \frac1{d^N} g_P (P^N(a)) = \frac1{d^N} \log (r |a|) \in \Q_+~.
$$
When $g_P(a) =0$, all iterates of $a$ belong to $\{ g_P = 0 \}$ which is a bounded set of $L$. 
We consider the weak closure $\Omega$ of the orbit of $a$ in the space of balls $\cT$. 
If $\Omega$ consists only of points in $L$, then it is compact in $(L, |\cdot|)$ by Proposition~\ref{prop0} (1), and we are in Case (3).

\smallskip
Otherwise by Proposition~\ref{prop0} (2) there exists a closed ball  $x$ with radius in $|L^*|$, and 
 a strictly increasing sequence $n_i \to \infty$ such that $P^{n_i}(a) \to x$, and $P^{n_i}(a)\in x$ for all $i$. 
Observe that there exists a closed ball containing the orbit of $a$, hence the orbit of $x$ is also bounded. 
Replacing $x$ by a suitable iterate we may assume that $\diam (x) = \max \{ \diam(P^n(x))\}_{n\in \N}$. 

Since $x$ contains both $P^{n_0}(a)$ and $P^{n_1}(a)$, the ball $P^{n_1 -n_0}(x)$ intersects $x$ and thus
is included in $x$. When $P^{n_1 -n_0}(x) =x$, we are in Case (2). When  $P^{n_1 -n_0}(x)$ is strictly included in $x$, then 
 $P^{n_1 -n_0}$ admits an attracting fixed point lying  in $L \cap P^{n_1 -n_0}(x)$ whose basin of attraction contains $x$. 
 This is however not possible because $x$ lies in the closure of the orbit of $a$.


\section{The point $\sa$ has an unbounded orbit}\label{sec:unbounded}

Recall the setting of the statement of the Main Theorem. Let  $P_t(z) = \a_0(t) z^d + \a_1(t) z^{d-1} + \cdots + \a_d(t)$ 
be a meromorphic family of polynomials of degree $d$ with  $\a_i \in \cO(\D)[t^{-1}]$ and $\a_0(t) \neq 0$ for all $t\in \D^*$.
Take also any meromorphic map $a \in  \cO(\D)[t^{-1}]$.

\smallskip

Recall from e.g.~\cite[Lemma 3.3]{demarco} or~\cite[Proposition 4.4]{favre} that the Nullstellensatz implies the existence of a constant $\beta >0$ such that 
\begin{equation}\label{eq:basic-estimate}
 |t|^\beta
\le
\frac{\max \{ 1, |P_t(z)|\}}{\max\{1, |z|^d\}}
\le
 |t|^{ -\beta}
\end{equation}
for all $|t| \le \frac12$ and for all $z\in k$.
Observe that in particular we get
\begin{equation}\label{eq:uniform}
\left|
g_{P_t}(z) - \log \max \{ 1 , |z| \}\right|
\le C \log|t|^{-1}
\end{equation}
for some constant $C>0$, for all $|t|\le \frac12$,  and all $z \in k$.

\smallskip

Since the base field $k$ is supposed to be algebraically closed conjugating $P_t$ by a suitable homothety with coefficient in $k$, we may write
 $\a_0(t) = t^N ( 1 + o(1))$ for some $N\in \Z$.  One can then consider the family of monic polynomials $\tilde{P}_t (z) = \phi_t^{-1} \circ P_{t^{d-1}} \circ \phi_t$ with $\phi_t(z) = \a_0(t^{d-1})^{-1/(d-1)}z$. 
 Observe that $g_{P_{t^{d-1}}}(a(t^{d-1})) = g_{\tilde{P}_t}(\phi_t^{-1} \circ a(t^{d-1}))$ so that the proof of the Main Theorem is reduced to  the case of a family of monic polynomials. From now on we shall therefore assume that $\a_0 \equiv 1$.

%
%
%
%
%
%
%

\begin{lemma}\label{lem:1}
There exists a constant $C>0$ and an integer $N \in \N^*$
such that the following holds. 

For any meromorphic function $a(t) = t^{-l} (1 + h)$ with $l\ge N$, and $h$ is a holomorphic map such that $h(0) =0$ and $\sup_\D |h| \le \frac12$, 
then we have
$$
\left|
\frac1{d} \log |P_t ( a(t))| - \log |a(t)|
\right| 
\le C~,
$$
for any $0<|t| \le \frac12$.
\end{lemma}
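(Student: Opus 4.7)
The plan is to exploit the fact that $P_t$ is monic (which we may assume after the reduction $\alpha_0\equiv 1$ carried out just before the lemma). Writing
$$
P_t(a(t)) \;=\; a(t)^d\Bigl(1+\sum_{i=1}^{d}\alpha_i(t)\,a(t)^{-i}\Bigr),
$$
the entire statement reduces to showing that the sum inside the bracket has modulus at most $\frac12$ for $l$ large enough. If we achieve this, the bracket has modulus in $[\frac12,\frac32]$ (in the Archimedean case by the usual triangle inequality; in the non-Archimedean case it is in fact equal to $1$), and then taking $\log$ and dividing by $d$ yields the desired bound with $C=\frac{\log 2}{d}$.

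To control that sum I would bound the two ingredients separately. The hypothesis $h(0)=0$ and $\sup_\D|h|\leq\frac12$ implies $\frac12\leq|1+h(t)|\leq\frac32$ on $\D$, hence $|a(t)|^{-i}\leq 2^i\,|t|^{il}$. On the other hand, since each coefficient $\alpha_i$ belongs to $\mathcal{O}(\D)[t^{-1}]$, one can find an integer $m\geq 0$ and a constant $M>0$, depending only on the family $P_t$, such that $|\alpha_i(t)|\leq M\,|t|^{-m}$ for every $i=1,\dots,d$ and every $0<|t|\leq\frac12$. Combining these,
$$
|\alpha_i(t)\,a(t)^{-i}| \;\leq\; M\,2^i\,|t|^{il-m} \;\leq\; M\,2^i\,|t|^{l-m}
$$
as soon as $l\geq m$ (using $i\geq 1$ and $|t|<1$). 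Summing over $i=1,\dots,d$ gives $\bigl|\sum_{i=1}^d\alpha_i(t)\,a(t)^{-i}\bigr|\leq M\,(2^{d+1}-2)\,|t|^{l-m}$.

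It then suffices to pick $N$ so large that $l\geq N$ forces $|t|^{l-m}\leq(2M(2^{d+1}-2))^{-1}$ on $0<|t|\leq\frac12$; for example $N:=m+\lceil\log_2(2M(2^{d+1}-2))\rceil$ works, since $|t|\leq\frac12$ gives $|t|^{l-m}\leq 2^{-(l-m)}$. There is no genuine obstacle here: the whole proof is an exercise in polar-order bookkeeping, the key observation being that the pole of order $il$ of $a(t)^{-i}$ at the origin eventually dominates the common pole order $m$ of all the $\alpha_i$. The only mild subtlety is to check that the argument runs identically over Archimedean and non-Archimedean ground fields, which is handled by the elementary fact that $|1+x|\in[\frac12,\frac32]$ whenever $|x|\leq\frac12$ in any complete metrized field.
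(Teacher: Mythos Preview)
Your proof is correct and follows essentially the same approach as the paper's. Both arguments factor $P_t(a(t))$ so as to isolate the dominant monic term and then bound the remaining bracket by controlling $|\alpha_i(t)|\,|t|^{il}$; the paper pulls out $t^{-ld}$ and writes the quotient $\bigl((1+h)^d+\alpha_1 t^l(1+h)^{d-1}+\cdots\bigr)/(1+h)$, arriving at the constant $C=\log\bigl((d+1)2^{d+1}\bigr)$, whereas you pull out $a(t)^d$ directly and obtain the sharper $C=\tfrac{\log 2}{d}$, but the idea is identical.
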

\begin{proof}
Pick $N$ large enough such that $\sup_{i, |t| \le \frac12} |t|^N |\a_i (t)| \le 1$. 
For any $l\ge N$, we may then write
$$
P_t ( a(t))  = t^{-ld} \left( (1+h)^d + \a_1 t^l (1+h)^{d-1} + \cdots + \a_0 t^{ld}\right)
$$
so that 
$$
\frac1{d} \log |P_t ( a(t))| - \log |a(t)| = \log \left|
\frac{(1+h)^d + \a_1 t^k (1+h)^{d-1} + \cdots + \a_0 t^{kd}}{1+h}
\right|
$$
is bounded by $ \log ((d+1)2^{d+1})$.
\end{proof}

\begin{proof}[Proof of the Main Theorem in the case $|\sP^n(\sa)| \to \infty$]
Recall that $\sP$ denotes the monic polynomial of degree $d$ with coefficients in $k((t))$ induced by the family $P_t(z) =  z^d + \a_1(t) z^{d-1} + \cdots + \a_d(t)$, 
and $\sa$ is the point in $k((t))$ defined by the meromorphic function $a$.
We endow $k((t))$ with the $t$-adic norm $|\cdot|$ normalized by $|t| = e^{-1}$.

\smallskip

If $\sa$ has an unbounded orbit under $\sP$, then replacing $a$ by $P^{n_0} \circ a$ for $n_0$ sufficiently large
we may suppose that $a$ has a pole of order $l$ which can be taken as large as we wish. 

If $l$ is strictly larger than the constant $C$ appearing in~\eqref{eq:uniform}, then we get
$$g_{P_t}(a(t)) \ge \log \max \{ 1 , |a(t) | \} - C \log|t|^{-1} \ge (l-C) \log |t|^{-1} + O(1)$$
for any $t$ small enough. In particular $g_{P_t}(a(t))$ is positive, and 
$g_{P_t}(a(t)) = \lim_n \frac1{d^n} \log |P^n \circ a(t)|$.

Now suppose $l \ge N$ so that we may apply the previous lemma. 
It follows that the sequence of functions $\frac1{d^n} \log |P^n \circ a| - \log |a|$ converges uniformly in $\D^*_{1/2}$ to 
a function $\varphi$ which is necessarily harmonic and bounded. This function thus extends to the origin and remains harmonic. 
We conclude that $g_{P_t}(a(t))  = \varphi(t) + \log|a|$ which shows that we are in Case 2 of the Main Theorem.
\end{proof}

\section{The orbit of $\sa$ is bounded and lies in the Fatou set}\label{sec:fatou}
We suppose now that $|\sP^m(\sa)|$ remains bounded and $\sa$ belongs to a fixed closed ball $B := \overline{B(\sb, \rho)}$ with $\sb \in k((t))$ and $\rho  = r^n$ for some $n\in \Z$. 
Observe that we may take $\sb$ to be a Laurent polynomial $b(t)$, and conjugating $P_t $ by $\phi_t(z) = t^n ( z + b(t))$, we may thus suppose that $B$ is the closed unit ball.

In that case, we can write $P_t(z)=Q(z)+tR_t(z)$ for some polynomial $Q\in k[z]$ with $1\leq \delta:=\deg(Q)\le d$ and $R_t(z)\in k((t))[z]$.
When $\delta =d$ then we are in the Case 1 of the Main Theorem.

When $\delta <d$, we shall prove that we fall in Case 3. 
To see this, observe first that there exists $C_1\geq1$ such that
\[\max\left\{1,|P_t(z)|\right\}\leq C_1\cdot \max\left\{1,|z|^\delta,|t|\cdot|z|^d\right\},\]
for all $z\in k$ and all $t\in\D$.
\begin{lemma}\label{lem:6}
 There exists a constant $A\geq 1$ independent of $n$ such that
\begin{equation}\label{eq du lem}
\max\left\{1,|P_t^n(a(t))|\right\}\leq C_1^{1+\cdots+\delta^{n-1}}\cdot A^{\delta^n}
\end{equation}
if $|t|=\left(C_1^{1+\cdots+\delta^{n-2}}\cdot A^{\delta^{n-1}}\right)^{\delta-d}$.
\end{lemma}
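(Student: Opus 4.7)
The strategy is to iterate the estimate
$$\max\{1,|P_t(z)|\} \leq C_1 \max\{1, |z|^\delta, |t|\cdot|z|^d\}$$
$n$ times along the orbit of $a(t)$, choosing $A$ so that the perturbation term $|t|\cdot|z|^d$ remains subordinate to $|z|^\delta$ at every step. I set
$$B_k := C_1^{1+\delta+\cdots+\delta^{k-1}}\cdot A^{\delta^k}\quad (k\ge 0),$$
with the convention that the exponent of $C_1$ is zero when $k=0$, so $B_0=A$. The statement then becomes: under the hypothesis $|t|=B_{n-1}^{\delta-d}$, one has $M_k(t):=\max\{1,|P_t^k(a(t))|\}\le B_k$ for every $0\le k\le n$; I would prove this by induction on $k$.

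For the base case, since $\sa$ lies in the closed $t$-adic unit ball of $k((t))$ after the preliminary conjugation, the Laurent series $a$ has no polar part and belongs to $\mathcal{O}(\D)$, hence is bounded on $\{|t|\le 1/2\}$. I would set
$$A:=\max\Bigl\{1,\sup_{|t|\le 1/2}|a(t)|\Bigr\},$$
which is visibly independent of $n$, yielding $M_0\le A=B_0$. For the inductive step, I apply the given estimate with $z=P_t^k(a(t))$: using $M_k\ge 1$ and $\delta\ge 1$, one gets
$$M_{k+1} \leq C_1\max\{M_k^\delta,\, |t|\cdot M_k^d\} \leq C_1\max\{B_k^\delta,\, |t|\cdot B_k^d\},$$
and a direct geometric-series computation shows $C_1\cdot B_k^\delta=B_{k+1}$. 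The entire induction therefore reduces to the inequality $|t|\cdot B_k^d\le B_k^\delta$, i.e. $|t|\le B_k^{\delta-d}$, for every $0\le k\le n-1$.

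This is where the precise form of the hypothesis on $|t|$ enters. Since $\delta-d<0$ and the sequence $(B_k)$ is increasing with $B_k\ge 1$, the sequence $(B_k^{\delta-d})$ is decreasing in $k$, so the tightest of these constraints occurs at $k=n-1$; and the hypothesis $|t|=B_{n-1}^{\delta-d}$ saturates precisely that one, hence all the others. The main conceptual point—and the only non-mechanical step—is recognizing that the exponent $\delta-d$ in the hypothesis is forced if one wants the $n$-step iteration to close with the perturbation $t\cdot R_t$ uniformly absorbed into the dominant piece $Q$; the rest of the argument is bookkeeping of a geometric series in the exponents of $C_1$ and $A$.
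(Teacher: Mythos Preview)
Your argument is correct and follows the same strategy as the paper's proof: iterate the basic estimate and check that the ``perturbation'' term $|t|\cdot|z|^d$ never dominates $|z|^\delta$. Your organization is in fact a bit cleaner: you fix $n$ and $|t|=B_{n-1}^{\delta-d}$ once and for all, and then induct on the orbit index $k$, whereas the paper inducts directly on $n$ while letting $|t|$ change from step to step---which, as literally written, requires the extra (easy) observation that the bound $M_n\le B_n$ proved for $|t|=B_{n-1}^{\delta-d}$ persists for the smaller value $|t|=B_n^{\delta-d}$. Your monotonicity remark $B_k^{\delta-d}\ge B_{n-1}^{\delta-d}$ for $k\le n-1$ handles exactly this point in one line.

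One small technical correction: you should take $A:=\max\{2,\sup_{|t|\le 1/2}|a(t)|\}$ rather than $\max\{1,\dots\}$, as the paper does. Your base case uses the bound $|a(t)|\le A$, which is only guaranteed on $\{|t|\le 1/2\}$; with $A\ge 2$ and $\delta\le d-1$ one has $|t|=B_{n-1}^{\delta-d}\le A^{\delta-d}\le 2^{-1}$, so the hypothesis $|t|=B_{n-1}^{\delta-d}$ indeed lands inside that disk. With $A\ge 1$ alone this is not automatic.
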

We fix some integer $N\ge 2$, and suppose  $|t|=R_N:= \left(C_1^{1+\cdots+\delta^{N-2}}\cdot A^{\delta^{N-1}}\right)^{\delta-d}$.
Under these assumptions,~\eqref{eq:uniform} implies
\begin{eqnarray*}
0\leq g_t(a(t)) & \leq & \frac{1}{d^N}\log^+|P_t^N(a(t))|-C\frac{\log|t|}{d^N}\\
& \leq & \left(\frac{\delta}{d}\right)^N\left(1+\frac{d-\delta}{\delta}C\right)\log\left(C_1^{1/(\delta-1)}A\right)~.
\end{eqnarray*}
Since $\sP^m(\sa)$ belongs to the closed unit ball for all $m$ by assumption, the functions $P^m(a(t))$ are analytic at $0$ so that 
$\frac1{d^m} \log|P^m(a(t))|$ is subharmonic on $\D$ (in the sense of Thuillier when $(k,|\cdot|)$ is non-Archimedean, see e.g.~\cite[\S 3]{thuillier}).
It follows that $g_t(a(t))$ is also subharmonic on $\D$, and the maximum principle
implies
\begin{eqnarray*}
0\leq g_t(a(t)) \leq \left(\frac{\delta}{d}\right)^N\cdot B,
\end{eqnarray*}
for all $|t|\leq R_N$ with $B:=\left(1+\frac{d-\delta}{\delta}C\right)\log\left(C_1^{1/(\delta-1)}A\right)$.

Fix $\varepsilon>0$ and pick $N\geq1$ large enough such that $\left(\frac{\delta}{d}\right)^N\cdot B \leq \varepsilon$. 
We have proved that $0 \le g_t(a(t)) \le \epsilon$ when $|t| \le \eta := R_N$ so that $g_t(a(t))$ is a non-negative subharmonic function on $\D$
which tends to $0$ at the origin.

\begin{proof}[Proof of Lemma~\ref{lem:6}]
 We argue by induction on $n\geq2$. Note that since $a$ is continuous, there exists $A\geq2$ such that $|a(t)|\leq A$ for all $|t|\le \frac12$. Assume that $|t|=A^{\delta-d}$. Then we have
\[\max\left\{1,|P_t(a(t))|\right\}\leq C_1\cdot \max\left\{1,A^\delta,|t|\cdot A^d\right\}=C_1\cdot A^\delta.\]
Now let $|t|=\left(C_1\cdot A^{\delta}\right)^{\delta-d}$. We obtain 
\[\max\left\{1,|P_t^2(a(t))|\right\}\leq C_1\cdot \max\left\{1,(C_1\cdot A^\delta)^\delta,|t|\cdot (C_1\cdot A^\delta)^d\right\}=C_1^{1+\delta}\cdot A^{\delta^2}.\]
Suppose~\eqref{eq du lem} holds for some $n\geq2$. When $|t|=\left(C_1^{1+\cdots+\delta^{n-1}}\cdot A^{\delta^n}\right)^{\delta-d}$, we have
\begin{eqnarray*}
\max\left\{1,|P_t^{n+1}(a(t))|\right\} & \leq & C_1 \max\left\{1,|P_t^n(a(t))|^\delta,|t|\cdot |P_t^n(a(t))|^d\right\}\\
& \leq & C_1\max\left\{1,\left(C_1^{1+\cdots+\delta^{n-1}}\cdot A^{\delta^n}\right)^\delta,|t|\left(C_1^{1+\cdots+\delta^{n-1}}\cdot A^{\delta^n}\right)^d\right\}\\
& \leq & C_1^{1+\cdots+\delta^{n}}\cdot A^{\delta^{n+1}},
\end{eqnarray*}
which concludes the proof.
\end{proof}

%
%

\section{The point $\sa$ lies in the Julia set}\label{sec:julia}
In this section, we complete the proof of the Main Theorem. We apply Theorem~\ref{thm:trucco} and discuss the situation case by case. In Case 1, the arguments of \S\ref{sec:unbounded} enable us to conclude directly. 
 In Case 2, we replace the marked point by $P^m(a)$ for a suitable $m$, and $P$ by a suitable iterate so that $\sa$ belongs to a fixed ball by $\sP$. 
 This case was treated in the previous section. It remains to treat Case 3: the orbit $\sa$ is compact in $k((t))$. 
 
 Conjugating the family by linear maps $\phi_t(z) = t^{-n} z$ with $n$ sufficiently large, we may suppose that the orbit of $\sa$ is included in the closed unit ball. 
 It follows that 
 $$g_m(t):= \frac1{d^m} \log^+|u_m(t)| \, \text{ with } u_m(t):=P_t^m(a(t)) $$ 
 is subharmonic on $\D$ and $g_t(a(t))$ also. 

Replacing $P_t$ by its second iterate, we may and shall assume that $d\ge 3$.
\smallskip

 As $(g_n(t))_n$ converges locally uniformly to $g_t(a(t))$ on $\D^*$ and as $g_t(a(t))$ is bounded on $\D$, there exists a constant $M\geq1$ such that, $\sup_{|t|=1/2}g_n(t)\leq M$ for all $n\geq1$. By the maximum principle, this gives
\[ \sup_{|t|\leq1/2}g_n(t)\leq M.\]

Fix any integer $l\geq C\cdot d$. Since $\sa$ is in the filled-in Julia set of $\sP$, its orbit is bounded. Up to a change of coordinates $z \mapsto t^{-M} z$ with $M$ sufficiently large, we may assume the orbit of $\sa$ lies in the unit ball in $k((t))$. In other words $P^n_t(a(t))$ is analytic at $0$ for all $n$. Observe that the set of balls of radius $r^{-l}$ centered at polynomials
covers the unit ball in $k((t))$. Since the orbit of $\sa$ is compact in $k$, we may thus find a finite collection of polynomials $Q_1, \cdots , Q_N$ such that for any $n$ one can find $i_n \in \{ 1, \cdots ,N\}$ such that  $P^n_t(a(t)) - Q_{i_n}(t) = O(t^l)$.

Let 
\[A:=\max_{1\leq j\leq N}\left\{\sup_{|t|<1}|Q_j(t)|+2\right\}.\]
Fix a very large integer $n_0\geq1$ once and for all. We may thus find $r_0>0$ small enough such that 
\[\sup_{|t|<r_0}|u_{n_0}(t)|\leq \log A.\]
Set $r_{j}:=r_0^{2^{j}}$ for any $j\ge0$, so that $0<r_{j+1}<r_j$ and $r_j\to0$ as $j\to\infty$.
\begin{lemma}\label{lem:2}
For all $j\geq0$, one has
\[\sup_{|t|<r_j}g_{n_0+j}(t)\leq \frac{C_1}{d^{n_0}},\]
with $C_1=\frac{d}{d-1}\log(3A)$.
\end{lemma}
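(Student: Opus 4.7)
My plan is to prove by induction on $j\ge 0$ the stronger pointwise estimate
\[
\sup_{|t|<r_j}|u_{n_0+j}(t)|\le 3A,
\]
which immediately implies the lemma since
\[
g_{n_0+j}(t)=\frac{\log^+|u_{n_0+j}(t)|}{d^{n_0+j}}\le\frac{\log(3A)}{d^{n_0}}\le\frac{C_1}{d^{n_0}},
\]
where the last inequality uses $C_1=\frac{d}{d-1}\log(3A)\ge\log(3A)$. The base case $j=0$ is (after possibly shrinking $r_0$) precisely the hypothesis, noting that $u_{n_0}(0)=Q_{i_{n_0}}(0)$ has modulus at most $A-2$ by the definition of $A$, so continuity forces $|u_{n_0}(t)|\le A\le 3A$ for $t$ close enough to $0$.

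For the inductive step, assume the bound holds at level $j$. Applying the basic estimate~\eqref{eq:basic-estimate} pointwise to $z=u_{n_0+j}(t)$ yields the crude dynamical bound
\[
\sup_{|t|\le r_j}|u_{n_0+j+1}(t)|=\sup_{|t|\le r_j}|P_t(u_{n_0+j}(t))|\le r_j^{-\beta}(3A)^d,
\]
where $\beta>0$ is the constant in~\eqref{eq:basic-estimate}. I now use the $t$-adic approximation $u_{n_0+j+1}(t)=Q_{i_{n_0+j+1}}(t)+t^l h(t)$ supplied by the compactness of the orbit of $\sa$. Here $h$ is analytic on $\D$: since the orbit of $\sa$ lies in $k[[t]]$, the function $u_{n_0+j+1}$ is holomorphic on $\D$, and $u_{n_0+j+1}-Q_{i_{n_0+j+1}}$ vanishes to order at least $l$ at the origin. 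Applying the maximum modulus principle to $h$ on the disk $\{|t|\le r_j\}$ and using $|Q_{i_{n_0+j+1}}|\le A-2$ on $\D$, we obtain
\[
|h(t)|\le\frac{r_j^{-\beta}(3A)^d+A}{r_j^l}\quad\text{on}\quad|t|\le r_j.
\]
Restricting to $|t|<r_{j+1}=r_j^2$ and multiplying by $|t^l|\le r_j^{2l}$ gives
\[
|t^l h(t)|\le r_j^l\bigl(r_j^{-\beta}(3A)^d+A\bigr)=r_j^{l-\beta}(3A)^d+r_j^l A.
\]

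The main obstacle is to ensure this error stays uniformly small in $j$. The essential point is $l\ge Cd>\beta$: iterating~\eqref{eq:basic-estimate} and letting $n\to\infty$ yields the explicit relation $C\ge\beta/(d-1)$, hence $l\ge Cd>\beta$. Choosing $r_0$ small enough at the outset so that $r_0^{l-\beta}(3A)^d+r_0^l A\le 2A$ (possible precisely because $l-\beta>0$), and using $r_j\le r_0$ for all $j$, the error above is bounded by $2A$ uniformly, yielding
\[
|u_{n_0+j+1}(t)|\le|Q_{i_{n_0+j+1}}(t)|+|t^l h(t)|\le(A-2)+2A<3A
\]
on $|t|<r_{j+1}$, which closes the induction. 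Geometrically, the doubly-exponentially shrinking radii $r_j=r_0^{2^j}$ are exactly what enables the maximum modulus trick to convert the dynamical blow-up factor $r_j^{-\beta}$ on the larger disk into the compensating gain $r_j^{l-\beta}$ on the smaller one.
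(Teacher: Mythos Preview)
Your argument is essentially correct and runs parallel to the paper's, though with a different bookkeeping. Two small points deserve attention.

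First, the ``crude dynamical bound'' $\sup_{|t|\le r_j}|u_{n_0+j+1}(t)|\le r_j^{-\beta}(3A)^d$ does not follow from applying \eqref{eq:basic-estimate} pointwise: for $|t|<r_j$ that estimate only gives $|u_{n_0+j+1}(t)|\le |t|^{-\beta}(3A)^d$, which blows up as $t\to 0$. What saves you is that $u_{n_0+j+1}$ is itself analytic on $\D$, so by the maximum principle its sup on $\{|t|\le r_j\}$ is attained on $\{|t|=r_j\}$, where the pointwise bound does read $r_j^{-\beta}(3A)^d$. You should say this explicitly.

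Second, your justification of $l>\beta$ is slightly tangled: the constant $C$ governing the choice $l\ge Cd$ in the paper is the one from \eqref{eq:basic} (the one–step estimate), for which $Cd=\beta$, not the iterated constant $\beta/(d-1)$ from \eqref{eq:uniform}. So $l\ge Cd$ only gives $l\ge\beta$. This is harmless, since $l$ is a free integer parameter and you may simply take $l>\beta$; just adjust the phrasing.

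As for the comparison with the paper: the paper does not attempt a uniform bound on $|u_{n_0+j}|$. Instead it proves the recursive inequality
\[
\sup_{|t|<r_{j+1}}g_{n_0+j+1}(t)\le \frac{\log(3A)}{d^{n_0+j+1}}+\sup_{|s|<r_j}g_{n_0+j}(s),
\]
obtained by first applying the Schwarz lemma exactly as you do, and then invoking \eqref{eq:basic} to pass from $g_{n_0+j+1}$ back to $g_{n_0+j}$ on the larger disk (this is where the condition $l\ge Cd$ is used, to kill the term $(l/d-C)\log r_j$). Summing the geometric series in $j$ yields $C_1=\frac{d}{d-1}\log(3A)$. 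Your route is more direct and in fact gives the stronger conclusion $|u_{n_0+j}|\le 3A$ (hence $g_{n_0+j}\le \log(3A)/d^{n_0+j}$, better than $C_1/d^{n_0}$), at the cost of an extra smallness assumption on $r_0$ that the paper's summation argument does not need. Both arguments rest on the same two ingredients: the Schwarz lemma applied to $(u-Q)/t^l$ and the one–step growth estimate, combined via the doubling $r_{j+1}=r_j^2$.
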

Applying~\eqref{eq:basic} and the maximum principle, we find
\[0\leq g_{n_0+j+\ell}(t)\leq \sup_{|\tau|<r_j}g_{n_0+j}(\tau)-\frac{C}{d^{n_0+j}}\log r_j\leq\frac{C_2}{d^{n_0}}\left(1-\left(\frac{2}{d}\right)^j\log r_0\right),\]
for any $|t|<r_j$, where $C_2:=\max(C,C_1)$.

\smallskip

Pick now $\varepsilon>0$. We may choose $n_0$ be large enough such that $\frac{C_2}{d^{n_0}}\leq\varepsilon/2$. We then fix $j\geq0$ large enough (depending on $r_0$, hence on $n_0$) so that $\left(1-\left(\frac{2}{d}\right)^j\log r_0\right)\leq 2$. The previous estimate implies
\[0 \le g_n(t) \leq \varepsilon\]
for all $|t|<r_j$ and all $n\geq n_0+j$. Letting $n$ tend to infinity we finally obtain 
\[0 \le g_t(a(t)) \leq \varepsilon\] for all $|t|<r_j$ which concludes the proof.


\begin{proof}[Proof of Lemma~\ref{lem:2}]
It is sufficient to show by induction on $j\geq0$ that 
\[\sup_{|t|<r_{j+1}}g_{n_0+j+1}(t)\leq \frac{\log(3A)}{d^{n_0+j+1}}+ \sup_{|s|<r_j}g_{n_0+j}(s).\]
By assumption, for all $j\geq1$, there exists $1\leq i_j\leq N$ such that the function
\[\frac{u_{n_0+j}(t)-Q_{i_j}(t)}{t^{l}}\]
is analytic on $\D$. The maximum principle implies for any $0<r<1$ the estimate
\[\left|u_{n_0+j}(t)-Q_{i_j}(t)\right|\leq \left(A+\sup_{|s|<r}\left|u_{n_0+j}(s)\right|\right)\cdot\left(\frac{|t|}{r}\right)^{l}~ \ \text{for all }|t|<r.\]
In particular, we find
\begin{eqnarray*}
\sup_{|t|<r_{j+1}}\, \left|u_{n_0+j+1}(t)\right| & \leq & A+\left(A+\sup_{|s|<r_j}\left|u_{n_0+j+1}(s)\right|\right)\cdot\left(\frac{r_{j+1}}{r_j}\right)^{l}\\
& \leq & 2A+\left(\sup_{|s|<r_j}\left|u_{n_0+j+1}(s)\right|\right)r_j^{l},
\end{eqnarray*}
hence
$$
\sup_{|t|<r_{j+1}}\, \max \{ 1, \left|u_{n_0+j+1}(t)\right|\}
\le (3A)\, \sup_{|s|<r_j}\max \{ 1,  \left|u_{n_0+j+1}(s)\right|\,  r_j^{l}\}~.
$$
When $\sup_{|s|<r_j}  \left|u_{n_0+j+1}(s)\right|\,  r_j^{l} \le 1$, we get
$$
\sup_{|t|<r_{j+1}}\, g_{n_0+j+1}(t) \le \frac{\log(3A)}{d^{n_0+j+1} } \le  \frac{\log(3A)}{d^{n_0+j+1}}+ \sup_{|s|<r_j}g_{n_0+j}(s)~,
$$
as required. Otherwise, we have
\begin{eqnarray*}
\sup_{|t|<r_{j+1}}\, g_{n_0+j+1}(t) 
&\le& 
\frac{\log(3A)}{d^{n_0+j+1}} + \frac{l}{d^{n_0+j+1}} \log r_j+ \sup_{|s|<r_j}g_{n_0+j+1}(s)
\\
&
\mathop{\le}\limits^{\text{by~\eqref{eq:basic}}}& 
\frac{\log(3A)}{d^{n_0+j+1}} + \left(\frac{l}{d^{n_0+j+1}}-\frac{C}{d^{n_0+j}}\right) \log r_j+ \sup_{|s|<r_j}g_{n_0+j}(s)
\\
&\le& 
\frac{\log(3A)}{d^{n_0+j+1}}+ \sup_{|s|<r_j}g_{n_0+j}(s)~,
\end{eqnarray*}
since $r_j<1$ and $l\geq Cd$. The lemma follows.
\end{proof}

\begin{remark}
We claim that 
\begin{equation}\label{eq:good exp}
g_{P_t}(a(t))  =  g_\sP(\sa)\, \log|t|^{-1}+ h(t)
\end{equation}
with $h$ continuous. 

When  $|\sP^n(\sa)|$ is bounded, then $g_\sP(\sa)=0$ and the equation follows from the arguments in \S \ref{sec:fatou} and \ref{sec:julia}.
When $|\sP^n(\sa)| \to \infty$ by the invariance of the Green function under iteration it is sufficient to prove~\eqref{eq:good exp} when $a(t)  =t^{-l} (1+h)$ with  $l\ge N$ as in Lemma~\ref{lem:1}.
In that case we have
$$
\frac1{d^n} \log|P^n(a)| = \log |a| + \varphi_n(t)= \log|\sa|\, \log|t|^{-1} + \log |1+h|+ \varphi_n(t)
$$ 
where $\varphi_n$ is a sequence of harmonic functions converging uniformly on $\D_{1/2}$.
We also have  $\frac1{d^n} \log|P^n(a)| = \frac1{d^n} \log|\sP^n(\sa)| \, \log|t|^{-1} + \psi(t)$ where $\psi$ is harmonic, 
hence $\frac1{d^n} \log|\sP^n(\sa)|= \log|\sa|$ for all $n$ and $g_\sP(\sa) = \log |\sa|$ which implies~\eqref{eq:good exp}.
\end{remark}

\section{Degeneration of the Lyapunov exponent}


In this section we prove Corollary~\ref{cor1}. Assume $(k,|\cdot|)$ is an algebraically closed complete metrized field of characteristic zero. 
Fix a meromorphic family $P_t\in\cO(\D)[t^{-1}][z]$ of degree $d\ge2$ polynomials defined over $k$.
Recall that the Lyapunov exponent of $P_t$ is equal to 
\begin{equation}\label{eq:crit mis}
L(P_t) = \log |d| + \sum_{i=1}^{d-1}  g_{P_t}(c_i)~,
\end{equation}
where $c_1, \cdots , c_{d-1}$ denote the critical points of $P_t$ counted with multiplicity. 

To control these critical points when $t$ varies, we observe that the polynomial $P'_t(z)$ is 
a polynomial of degree $d-1$  with coefficients in $\cO(\D)[t^{-1}]$ and
dominant term $da_d(t) z^{d-1}$. It splits over the field of Puiseux series so that one can find Puiseux series
$c_1 (t), \cdots, c_{d-1}(t)$ such that $P'_t(z) = d a_d(t)\, \prod_{i=1}^{d-1} ( z - c_i(t))$. Pick any sufficiently divisible integer $N$ such that 
all series $c_i(t^N)$ become formal power series. By Artin's approximation theorem~\cite{Artin}, they are necessarily analytic
in a neighborhood of $0$.

Our Main Theorem applied to the meromorphic family $P_{t^N}$ and  the marked points $c_i(t^N)$ shows that we can write
$g_{P_{t^N}} (c_i(t^N)) = \lambda_i \log |t|^{-1} + h_i(t)$ where $h_i$ is a continuous function and $\lambda_i \in \Q_+$.
By~\eqref{eq:crit mis}, we infer
$$
L(P_{t^N}) = \log |d| +\left(\sum_{i=1}^{d-1} \lambda_i\right)\,  \log|t|^{-1}  + \sum_{i=1}^{d-1} h_i(t)~.
$$
Now observe that by definition $\tilde{h}(t) =  \sum_{i=1}^{d-1} h_i(t)$ is a continuous function on the unit disk
which is invariant by the multiplication by any $N$-th root of unity. It follows that one may find a continuous function 
$h$ on the unit disk such that $h(t^N) = \tilde{h}(t)$, and we get $L(P_t) = \lambda \log|t|^{-1} + h(t)$ with 
$\lambda := \frac1N\, \sum_{i=1}^{d-1} \lambda_i\in \Q_+$ as required.
 
\smallskip

Suppose now that $\lambda =0$. Denote by $\sP$ the polynomial with coefficients in $k((t))$ defined by the family $P_t$, and  by
$\sc_i$ the point in $k((t^{1/N}))$ defined by the Puiseux series $c_i(t)$.
By~\cite[Theorem~C]{favre} and~\cite[\S 5]{okuyama}, it follows that $g_{\sP}(\sc_i) =0$ for all $i$ so that all critical points of $\sP$ belongs to the filled-in Julia set. 

We claim that there exists  an affine transformation $\phi$ with coefficients in $k((t))$  such that 
$\sQ := \phi^{-1}\circ \sP\circ \phi$ leaves the closed unit ball totally invariant. 

Granting this claim we conclude the proof of the corollary. We write 
$\phi(z) = b_0(t)z + b_1(t)$  with $b_i(t) = t^{-n_i} (b_{i0} + \sum_{i\ge 1} b_{ij} t^j)$, $b_{i0} \neq0$ and $b_{ij}\in k$, 
and we define for all $M\ge1$ the affine transformation obtained by truncation with coefficients in the field of Laurent polynomials $\phi_M = b^M_0(t)z + b^M_1(t)$  such that
$b_i^M =   t^{-n_i} (b_{i0} + \sum_{M\ge i\ge 1} b_{ij} t^j)$.
For $M$ large enough the difference $\sQ - \sQ_M$ is a polynomial with coefficients in $t k[[t]]$ so that 
the polynomial  $\sQ_M := \phi_M^{-1}\circ \sP\circ \phi_M$ leaves the closed unit ball totally invariant too.

In particular  $\sQ_M$ is a polynomial of degree $d$ with coefficients in $k[[t]]$
and dominant term $\sa z^d$ with $\sa$ invertible (in $k[[t]]$). Together with the fact that the family $P_t$ is meromorphic and the 
coefficients of $\phi_M$ are Laurent polynomials,  we conclude that $\sa$ determines an analytic function $a(t)$ with $a(0) \neq 0$, and
$\sQ_M$ determines an analytic family of polynomials of the form  $Q_t (z) = a(t) \, z^ d  + \mathrm{l.o.t}$. conjugated to $P_t$, as required.

\smallskip

It remains to prove our claim. Denote by $\mathbb{L}$ the completion of the field of Puiseux series (i.e. of the algebraic closure of $k((t))$).

By~\cite[Corollary 2.11]{Kiwi:Cubic} the fact that all critical points of $\sP$ belong to the filled-in Julia set implies that $\sP$ is simple over $\mathbb{L}$.
This means that the filled-in Julia set of $\sP$ in $\A^1_\mathbb{L}$ is equal to a closed ball $\bar{B}$. 
This ball contains all fixed points, and this set is defined by a polynomial equation of the form $a_0 z^l + a_1 z^{l-1} + \ldots + a_l  =0$ of degree $l$ with $a_i\in k((t))$ hence
$B$ contains the point $ - a_1/ (l a_0) \in k((t))$. The radius of $B$ also belongs to $|k((t))^*|$ because $B$ is fixed by 
$\sP$, so that we can find an affine transformation with coefficients in $k((t))$ sending $B$ to the closed unit ball.

\section{The adelic metric associated to a pair $(P,a)$}

In this section, we prove Corollary~\ref{cor2}. Let us first recall the setting.
Let $C$ be any smooth connected affine curve $C$ defined over a number field $\KK$.
Assume $P$ is an algebraic family parametrized by $C$ and $a\in\KK[C]$ is a marked point such that $(P,a)$ is not isotrivial, and $a$ is not persistently preperiodic. 

Denote by $M_\KK$ be the set of places of  $\KK$.

\medskip

\noindent{ \bf Step 1}: construction of  a  suitable line bundle $\mathcal{L}$ on $\bar{C}$ the (smooth) projective compactification of $C$.

\smallskip

To any branch at infinity $\mathfrak{c}\in \bar{C} \setminus C$ we associate a non-negative rational number $\alpha(\mathfrak{c})$ as follows. 

We fix a projective embedding of $\bar{C}$ into the projective space $\p^3_\KK$ such that $\mathfrak{c}$ is the homogeneous point $[0:0:0:1]$. By e.g.~\cite[Proposition 3.1]{favre-gauthier}, there exist  a number field $\LL \supset \KK$, a finite set of places $S$ of $\LL$ and adelic series $\beta_1, \beta_2, \beta_3 \in t \cO_{\LL,S}[[t]]$ such that the following holds.

For each place $v \in M_{\LL}$ the series $\beta_j(t)$
are convergent in some neighborhood 
$\{ |t| < c_v\}$ of the origin.  The map $\beta(t) = [\beta_1(t): \beta_2(t): \beta_3(t)):1]$ induces an analytic isomorphism
from $\{ |t| < c_v\}$ to a neighborhood of $\mathfrak{c}$ in $\bar{C}(\C_v)$.  And the constants $c_v $ equal $1$ for all but finitely many places. In the sequel we refer to an adelic parameterization of $\bar{C}$ near $\mathfrak{c}$ for such a data.

\smallskip

Our family of polynomials is determined by $d+1$ rational functions on $\bar{C}$  
$$P(z) = \a_0 z^d + \a_1 z^{d-1} + \cdots + \a_d$$
with $\a_i \in \KK(C)$, so that  $P_\mathfrak{c} :=  (\a_0\circ \beta)\, z^d + (\a_1\circ \beta)\, z^{d-1} + \cdots + (\a_d\circ \beta)$
belongs to $\cO_{\LL,S}((t))[z]\subset \LL((t))[z]$. Write $a_\mathfrak{c} = a \circ \beta \in \LL((t))$.

Working over the non-Archimedean field $L = \LL((t))$ endowed with the $t$-adic norm, we
may define $\a(\mathfrak{c}) := g_{P_\mathfrak{c}}(a_\mathfrak{c})$ which is a non-negative rational number by Theorem~\ref{thm:trucco}.

\smallskip

We finally define the effective divisor  with rational coefficients
$$\mathsf{D} := \sum_{\bar{C}\setminus C} \a(\mathfrak{c}) [\mathfrak{c}]$$
 and set
$\mathcal{L} := \cO_{\bar{C}} (q \mathsf{D})$ for a sufficiently divisible $q\in \N^*$.
Observe that since $C$ is defined over $\KK$, its projective compactification $\bar{C}$ is also defined over $\KK$ and the divisor $\mathsf{D}$ too since it is invariant by the absolute Galois group of $\KK$.

\medskip

\noindent{ \bf Step 2}: we build a semi-positive and continuous metrization $|\cdot|_{\mathcal{L},v}$ on the line bundle induced by $\mathcal{L}$ on $\bar{C}_{\KK_v}$ for any place $v\in M_\KK$.

\smallskip

Fix a place $v\in M_\KK$. We let $\C_v$ be the completion of the algebraic closure $\bar{\KK}_v$ of the completion $\KK_v$ of $(\KK,|\cdot|_v)$, and define
\[g_{P_t,v}(z):=\lim_{n\to\infty}\frac{1}{d^n}\log^+|P_t^n(z)|_v, \ z\in\C_v.\]
Pick a branch at infinity $\mathfrak{c}\in \bar{C}\setminus C$ and choose a local adelic parameterization $\beta$ of $\bar{C}$ centered at $\mathfrak{c}$ as in the previous step. It is given by formal power series with coefficients in a number field $\LL$.

According to the Main Theorem, there exists $\alpha_v(\mathfrak{c})\in\Q_+$ such that 
\begin{equation}
g_{a,v}(t):=g_{P_{\beta(t)},v}(a(t))=\alpha_v(\mathfrak{c})\cdot\log|t|^{-1}+h_{\mathfrak{c},v}(t),\label{eq:cont}
\end{equation}
where $h_{\mathfrak{c},v}$ extends continuously across $0$. 
Moreover by~\eqref{eq:good exp} the constant  $\alpha_v(\mathfrak{c})$ is equal to $g_{P_\mathfrak{c}}
(a_\mathfrak{c}) = \alpha(\mathfrak{c})$. In particular,  $\alpha_v(\mathfrak{c})$ is independent  of $v$. 

Pick an open subset $U$ of the Berkovich analytification $\bar{C}^{v,\an}$ of $\bar{C}$ over the field $\KK_v$ and a section $\sigma$ of the line bundle $\mathcal{L}$ over $U$. By definition, $\sigma$ is a meromorphic function on $U$ whose divisor of poles and zeroes satisfies $\mathrm{div}(\sigma)+q \mathsf{D}\geq0$. We set
\[|\sigma|_{a,v}:=|\sigma|_ve^{-q\cdot g_{a,v}}.\]
According to \eqref{eq:cont}, the function $|\sigma|_{a,v}$ is continuous. Moreover, since $g_{a,v}$ is subharmonic on $C^{v,\an}$ and the function $-\log|\sigma|_{a,v}$ is subharmonic on $U\cap C^{v,\an}$. Since it extends continuously to $U$, \cite[ Lemma 3.7]{favre-gauthier} implies that $-\log|\sigma|_{a,v}$ is subharmonic on $U$. 
This implies the metrization $|\cdot|_{a,v}$ is continuous and semi-positive in the sense of Zhang (by definition in the Archimedean case and by \cite[Lemma 3.11]{favre-gauthier} in the non-Archimedean case).

%

\medskip

\noindent{ \bf Step 3}: the line bundle  $\mathcal{L}$ is  (very) ample (if $q$ is large enough). 

\smallskip

Since $\mathcal{L}$ is determined by the effective divisor $\mathsf{D}= \sum_{\bar{C}\setminus C}\alpha(\mathfrak{c})\cdot [\mathfrak{c}]$
it is sufficient to show that $\alpha(\mathfrak{c})>0$ for at least one branch at infinity.
Suppose by contradiction that $\mathsf{D}=0$, and choose any Archimedean place $v_0\in M_\KK$. 
Observe first that the function $g_{a,v_0}$ extends continuously to $\bar{C}(\C)$ as a subharmonic function which is thus constant since $\bar{C}(\C)$ is compact.

By~\cite{favredujardin} the family of analytic maps $t \mapsto P_t^n(a(t))$ is hence normal locally near any point
$t\in C$. Since $(P,a)$ not isotrivial, \cite[Theorem 1.1]{demarco} implies $a$ is persistently preperiodic, which is a contradiction.

\medskip

\noindent{ \bf Step 4}: the collection of metrizations $|\cdot|_{\mathcal{L},v}$ equips $\mathcal{L}$ with an adelic semi-positive continuous metrization, and 
\begin{equation}\label{eq:easy}
h_{\hat{\mathcal{L}}}(t)=q\cdot h_{P,a}(t) \ \text{ for all } \ t\in C(\bar{\KK}).
\end{equation}

Let us prove the first assertion. Since for any place $v$ the metrization $|\cdot|_{\mathcal{L},v}$ is semi-positive and continuous, 
one only needs to  show that the collection $\{|\cdot|_{a,v}\}_{v\in M_\KK}$ is adelic. Following exactly the proof of \cite[Lemma 4.2]{ghioca-ye}, we get the existence of $g\in\KK(\bar{C})$ such that $q\cdot g_{a,v}(z)=\log|g(z)|_v$ for all but finitely many places $v\in M_\KK$ and the conclusion follows.

To get~\eqref{eq:easy}, we follow closely \cite[\S4.1]{favre-gauthier}. If $t$ is a point in $C$ that is defined over a finite extension $\KK$, 
denote by $\mathsf{O}(t)$ its orbit  under the absolute Galois group of $\KK$, and let $\deg(t):=\mathrm{Card}(\mathsf{O}(t))$.
Fix a rational function $\phi$ on $\bar{C}$ with $\mathrm{div}(\phi)+q\mathsf{D}\geq0$ that is not vanishing at $t$.
By~\cite[\S 3.1.3]{ACL2}, since $\phi(t)\neq0$ we have
\begin{eqnarray*}
h_{\hat{\mathcal{L}}}(t)
&= &
\frac1{\deg(t)} \sum_{t'\in\mathsf{O}(t)} \sum_{v\in M_\KK}  - \log |\phi|_{a,v}(t')
\\
&=&
\frac1{\deg(t)} \sum_{t'\in\mathsf{O}(t)} \sum_{v\in M_\KK}  (q\cdot g_{a,v}(t') -  \log |\phi|_v(t'))
\\
&=&
\frac1{\deg(t)} \sum_{t'\in \mathsf{O}(t)} \sum_{v\in M_\KK}  q\cdot g_{P_{t'},v} (a(t'))= q\cdot \hat{h}_{P_t}(a(t))\ge 0
\end{eqnarray*}
where the last line follows from the product formula and the definition of $\hat{h}_{P_t}$. 

\medskip

\noindent{ \bf Step 5}: the total height of $\bar{C}$ is $h_{\hat{\mathcal{L}}}(\bar{C})=0$.

\smallskip

We use~\cite[(1.2.6) \& (1.3.10)]{ACL2}.
Choose any two meromorphic functions $\phi_0, \phi_1$  such that 
$\mathrm{div}(\phi_0)+ q\mathsf{D}$ and $\mathrm{div}(\phi_1)+ q\mathsf{D}$ are both effective with disjoint support included in $C$.  Let $\sigma_0$ and $\sigma_1$ be the associated sections of $\mathcal{O}_{\bar{C}}(q\mathsf{D})$. Let $\sum n_i [t_i]$ be the divisor of zeroes of $\sigma_0$, and
$\sum n'_j [t'_j]$ be the divisor of zeroes of $\sigma_1$.
Then 
\begin{eqnarray*}
h_{\hat{\mathcal{L}}}(\bar{C}) 
& = &
\sum_{v\in M_\KK} (\hdiv(\sigma_0)\cdot \hdiv(\sigma_1) | \bar{C})_v
\\
& = &
\sum_i n_i \cdot q\cdot \hat{h}_{P_{t_i}}(a(t_i)) - 
\sum_{v\in M_\KK} \int_{\bar{C}} \log| \sigma_0|_{a,v}\,  \Delta (q\cdot g_{a,v})
\\
& = &
\sum_{v\in M_\KK} \int_{\bar{C}} q\cdot g_{a,v}\,  \Delta (q\cdot g_{a,v}) \ge0~,
\end{eqnarray*}
where the third equality follows from Poincar\'e-Lelong formula and writing 
$\log| \sigma_0|_{a,v} = \log |\phi_0|_v - q\cdot g_{a,v}$.

\smallskip

Pick any archimedean place $v_0$. The positive measure $\Delta g_{a,v_0}$ has total mass the degree of $\mathcal{L}$ on $\bar{C}$ hence is non-zero. It follows from e.g.~\cite[Lemma 2.3]{favredujardin} that any point $t_0$ in the support of $\mu_{a,v_0}$ is accumulated by parameters $t_* \in C(\KK)$ such that $P_{t_*}^n(a(t_*))=P_{t_*}^m(a(t_*))$
 for some $n>m\geq0$. For any such point~\eqref{eq:easy} implies
$h_{\hat{\mathcal{L}}}(t_*)=0$.  In particular, the essential minimum of $h_{\hat{\mathcal{L}}}$  is non-positive. 
By the arithmetic Hilbert-Samuel theorem (see~\cite[Th\'eor\`eme~4.3.6]{thuillier},~\cite[Proposition~3.3.3]{MR1810122}, or ~\cite[Theorem 5.2]{zhang}), we get $h_{\hat{\mathcal{L}}}(\bar{C}) = 0$, ending the proof.

\bibliographystyle{short}
\bibliography{biblio}

\begin{thebibliography}{FRL}

\bibitem[Ar]{Artin}
M.~Artin.
\newblock On the solutions of analytic equations.
\newblock {\em Invent. Math.}, 5:277--291, 1968.

\bibitem[Au]{MR1810122}
Pascal Autissier.
\newblock Points entiers sur les surfaces arithm\'etiques.
\newblock {\em J. Reine Angew. Math.}, 531:201--235, 2001.

\bibitem[B]{Berko}
Vladimir~G. Berkovich.
\newblock {\em Spectral theory and analytic geometry over non-{A}rchimedean
  fields}, volume~33 of {\em Mathematical Surveys and Monographs}.
\newblock American Mathematical Society, Providence, RI, 1990.

\bibitem[BD1]{BD-unlikely}
Matthew Baker and Laura DeMarco.
\newblock Preperiodic points and unlikely intersections.
\newblock {\em Duke Math. J.}, 159(1):1--29, 2011.

\bibitem[BD2]{BD}
Matthew Baker and Laura DeMarco.
\newblock Special curves and postcritically finite polynomials.
\newblock {\em Forum Math. Pi}, 1:e3, 35, 2013.

\bibitem[CL]{ACL2}
Antoine Chambert-Loir.
\newblock Heights and measures on analytic spaces. {A} survey of recent
  results, and some remarks.
\newblock In {\em Motivic integration and its interactions with model theory
  and non-{A}rchimedean geometry. {V}olume {II}}, volume 384 of {\em London
  Math. Soc. Lecture Note Ser.}, pages 1--50. Cambridge Univ. Press, Cambridge,
  2011.

\bibitem[D]{demarco}
Laura DeMarco.
\newblock Bifurcations, intersections, and heights.
\newblock {\em Algebra Number Theory}, 10(5):1031--1056, 2016.

\bibitem[DF]{favredujardin}
Romain Dujardin and Charles Favre.
\newblock Distribution of rational maps with a preperiodic critical point.
\newblock {\em Amer. J. Math.}, 130(4):979--1032, 2008.

\bibitem[DG]{DeMarco-Ghioca}
L.~{DeMarco} and D.~{Ghioca}.
\newblock {Rationality of dynamical canonical height}.
\newblock {\em ArXiv e-prints}, February 2016.

\bibitem[F]{favre}
C.~{Favre}.
\newblock {Degeneration of endomorphisms of the complex projective space in the
  hybrid space}.
\newblock {\em ArXiv e-prints}, November 2016.

\bibitem[FG]{favre-gauthier}
C.~{Favre} and T.~{Gauthier}.
\newblock {Classification of Special Curves in the Space of Cubic Polynomials}.
\newblock {\em International Math Research Notices, to appear}, 2016.

\bibitem[FRL]{MR2578470}
Charles Favre and Juan Rivera-Letelier.
\newblock Th\'eorie ergodique des fractions rationnelles sur un corps
  ultram\'etrique.
\newblock {\em Proc. Lond. Math. Soc. (3)}, 100(1):116--154, 2010.

\bibitem[GY]{ghioca-ye}
D.~{Ghioca} and H.~{Ye}.
\newblock {A Dynamical Variant of the {A}ndr\'e-{O}ort {C}onjecture}.
\newblock {\em International Math Research Notices, to appear}, 2016.

\bibitem[K]{Kiwi:Cubic}
Jan Kiwi.
\newblock Puiseux series polynomial dynamics and iteration of complex cubic
  polynomials.
\newblock {\em Ann. Inst. Fourier (Grenoble)}, 56(5):1337--1404, 2006.

\bibitem[O]{okuyama}
Y\^usuke Okuyama.
\newblock Quantitative approximations of the {L}yapunov exponent of a rational
  function over valued fields.
\newblock {\em Math. Z.}, 280(3-4):691--706, 2015.

\bibitem[S]{Silverman}
Joseph~H. Silverman.
\newblock {\em The arithmetic of dynamical systems}, volume 241 of {\em
  Graduate Texts in Mathematics}.
\newblock Springer, New York, 2007.

\bibitem[Th]{thuillier}
Amaury Thuillier.
\newblock Th\'eorie du potentiel sur les courbes en g\'eom\'etrie analytique
  non archim\'edienne. applications \`a la th\'eorie d'arakelov, 2005.
\newblock Th\`ese de l'Universit\'e de Rennes 1, viii + 184 p.

\bibitem[Tr]{trucco}
Eugenio Trucco.
\newblock Wandering {F}atou components and algebraic {J}ulia sets.
\newblock {\em Bull. Soc. Math. France}, 142(3):411--464, 2014.

\bibitem[Z]{zhang}
Shou-Wu Zhang.
\newblock Positive line bundles on arithmetic varieties.
\newblock {\em J. Amer. Math. Soc.}, 8:187--221, 1995.

\end{thebibliography}

\end{document}